\keywords{derived category, (co)ghost index, (co)ghost lemma, level, DG algebra, Koszul complex}
\subjclass[2010]{18E30 (primary); 13B30, 16E45 (secondary)}
\title[A partial converse ghost lemma]{A partial converse ghost lemma for the  derived category of a commutative noetherian ring}
\author[Jian Liu]{Jian Liu}
\address{School of Mathematical Sciences, University of Science and Technology of China, Hefei 230026, Anhui, P.R. China.}
\email{liuj231@mail.ustc.edu.cn}
\author[Josh Pollitz]{Josh Pollitz}
\address{Department of Mathematics,
University of Utah, Salt Lake City, UT 84112, U.S.A.}
\email{pollitz@math.utah.edu}
\DeclareMathOperator{\h}{H}
\newcommand{\Z}{\mathbb{Z}}
\newcommand{\T}{\mathsf{T}}
\newcommand{\C}{\mathsf{C}}
\newcommand{\D}{\mathsf{D}}
\newcommand{\del}{\partial}
\newcommand{\m}{\mathfrak{m}}
\newcommand{\p}{\mathfrak{p}}
\newcommand{\e}{\epsilon}
\DeclareMathOperator{\Spec}{Spec}
\DeclareMathOperator{\Hom}{Hom}
\DeclareMathOperator{\thick}{\mathsf{thick}}
\newcommand{\shift}{{\mathsf{\Sigma}}}
\newcommand{\gin}{{\mathsf{gin}}}
\newcommand{\cogin}{{\mathsf{cogin}}}
\newcommand{\level}{{\mathsf{level}}}
\newcommand{\xra}{\xrightarrow}
\newtheorem{theorem}{Theorem}[section]
\newtheorem{proposition}[theorem]{Proposition}
\newtheorem{lemma}[theorem]{Lemma}
\newtheorem{corollary}[theorem]{Corollary}
\newcounter{intro}
\newtheorem{introthm}[intro]{Theorem}
\theoremstyle{definition}
\newtheorem{example}[theorem]{Example}
\newtheorem{remark}[theorem]{Remark}
\newtheorem{chunk}[theorem]{}
\newtheorem*{ack}{Acknowledgements}
\begin{document}

\maketitle

\begin{abstract}
In this article a condition is given to detect the containment among thick subcategories of the bounded derived category of a commutative noetherian ring. More precisely, for a   commutative noetherian ring $R$
 and    complexes of $R$-modules with finitely generated  homology $M$ and $N$, we show  $N$ is in the thick subcategory generated by $M$ if and only if the ghost index of $N_\p$ with respect to $M_\p$ is finite for each prime $\p$ of $R$. To do so, we establish a ``converse coghost lemma"  for the bounded derived category of a non-negatively graded DG algebra with noetherian homology.
\end{abstract}
\section*{Introduction}

This article is concerned with certain numerical invariants  and thick subcategories  in the bounded derived category of a  commutative noetherian ring. Let $R$ be a commutative noetherian ring,  $\D(R)$ will  denote its derived category, and  $\D^f_b(R)$ will be the full subcategory of $\D(R)$ consisting of objects with finitely generated  homology. 

An object $N$ of  $\D(R)$ is in the thick subcategory generated by  $M$, denoted $\thick_{\D(R)}(M)$, provided that $N$  can be inductively built from $M$ using the triangulated structure of $\D(R)$ (see \ref{level} for more details). 
 There are  cases where a notion of support reports on whether $N$ is in $\thick_{\D(R)}(M)$. For example, there is the celebrated  theorem of Hopkins \cite[Theorem 11]{H} and Neeman \cite[Theorem 1.5]{N} that applies when $M$ and $N$ are perfect complexes.
Another instance is when $R$ is locally complete intersection by using support varieties;  this was proved by Stevenson for thick subcategories containing $R$   when $R$ is a quotient of a regular ring \cite[Corollary 10.5]{Stev}, and in general in \cite[Theorem 3.1]{JJ}. 
However, in general, detecting containment among thick subcategories  can be an intractable task.
 
In this article, we give a new criterion to determine the containment among thick subcategories of $\D^f_b(R)$ based on certain numerical invariants being locally finite.   We quickly define these below; see \ref{level}, \ref{coghost}, and \ref{ghostchunk} for precise definitions.

For a triangulated category $\T$, fix objects $G$ and $X$. The level of $X$ with respect to $G$ counts the minimal number of cones needed to generate $X$, up to suspensions and direct summand, starting from $G$. We  denote this by $\level_{\T}^G(X)$ and note that this is finite exactly when $X$ is in $\thick_{\T} (G)$.  The coghost index of $X$ with respect to $G$, denoted $\cogin_\T^G (X)$, is  the minimal number  $n$ satisfying that any composition 
\[
X^n\xra{f^n}X^{n-1}\to \ldots \xra{f^1} X^0=X,
\] where each $\Hom_\T(f^i,\shift^j G)=0$, must be zero in $\T$. Switching the variance in the definition above determines the ghost index of $X$  with respect to $G$, denoted $\gin_\T^G(X). $

These invariants are of independent interest and have been studied in \cite{ABI,ABIM,Beli,Bergh,BonBer, Christensen,Kelly,Letz,Rouq}. In general,  they  are related by the following well-known  \emph{(co)ghost lemma}:
\[
\max\{\cogin_\T^G(X), \gin^G_\T (X)\}\leq \level_\T^G(X).\] 
Oppermann and \v{S}\'{t}ov\'{i}\v{c}ek proved a so-called  \emph{converse  coghost lemma:} Namely, 
$
\cogin_{\D^f_b(R)}^M(N)$ and $\level_{\D^f_b(R)}^M (N)$ agree  whenever $M$ and $N$ are objects of $\D^f_b(R)$, see \cite[Theorem 24]{OS}. 
In general, it remains open whether the analogous equality holds for objects of $\D^f_b(R)$ when $\cogin$ is replaced by $\gin$; furthermore,  \cite[2.11]{Letz} notes that  the techniques used in \cite[Theorem 24]{OS} cannot be suitably adapted to prove this.

 In this article we ask whether finiteness of certain ghost  indices can determine finiteness of $\level$, and hence  containment among thick subcategories. 
The main result in this direction is the following which is contained in Theorem \ref{main theorem}.
\begin{introthm}\label{thmintro}
Let $R$ be a commutative noetherian ring. For $M$ and $N$ in $\D^f(R),$ the following are equivalent: 
\begin{enumerate}
    \item  \label{e2}$\level^M_{\D^f_b(R)} (N)<\infty$;
    \item \label{e3}$\gin_{\D^f_b(R_\p)}^{M_\p} (N_\p)<\infty$ for all prime ideals $\p$ of $R$. 
\end{enumerate}
\end{introthm}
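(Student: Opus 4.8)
The plan is to prove the substantive implication $(2)\Rightarrow(1)$; for $(1)\Rightarrow(2)$ I would just localize a finite building of $N$ out of $M$ to get $\level^{M_\p}_{\D^f_b(R_\p)}(N_\p)\le\level^{M}_{\D^f_b(R)}(N)<\infty$ for every prime $\p$, and then apply the ghost lemma $\gin\le\level$ recalled above. For $(2)\Rightarrow(1)$ the first move is a reduction to the local case via a local-to-global principle for generation in $\D^f_b$, of the type studied in \cite{Letz}: $\level^{M}_{\D^f_b(R)}(N)<\infty$ if and only if $\level^{M_\p}_{\D^f_b(R_\p)}(N_\p)<\infty$ for every $\p$. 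The nontrivial direction combines a spreading-out argument — a finite building of $N_\p$ out of $M_\p$ uses only finitely many objects and morphisms, hence is already defined over some $R_f$ with $f\notin\p$ — with the noetherian hypothesis, which makes finitely many such basic opens cover $\Spec R$, followed by a patching step over that cover. Granting this, the theorem for $R$ reduces to the theorem for each $R_\p$, so I may take $(R,\m,k)$ local; and since a faithfully flat descent argument shows that $\m$-adic completion affects neither the finiteness of $\gin$ in $(2)$ nor membership of $N$ in $\thick(M)$, I may further assume $R$ is complete, hence a homomorphic image $R=Q/(\f)$ of a regular local ring $Q$, say with $\f=f_1,\dots,f_c$.

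The heart of the argument is to produce, at the level of a DG algebra, a duality interchanging ghost and coghost maps. Set $A=\kos{Q}{\f}$, the Koszul complex of $Q$ on $\f$ regarded as a non-negatively graded DG $Q$-algebra: then $\h_0(A)=R$, the homology $\h(A)$ is a finitely generated $R$-module and hence noetherian, and — crucially, because $Q$ is regular — $A$ is self-dual, $\RHom_Q(A,Q)\simeq\shift^{-c}A$ as DG $A$-modules, so $(-)^{\dagger}:=\RHom_A(-,A)$ is an exact contravariant autoequivalence of $\D^f_b(A)$ with $\level^{G}_{\D^f_b(A)}(X)=\level^{G^{\dagger}}_{\D^f_b(A)}(X^{\dagger})$. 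Restriction along $A\to R$ carries $M,N$ into $\D^f_b(A)$; the aim is to transport the hypothesis $\gin^{M}_{\D^f_b(R)}(N)<\infty$ to finiteness of $\gin^{M}_{\D^f_b(A)}(N)$ for these images — the substantive part — and then rewrite it as $\cogin^{M^{\dagger}}_{\D^f_b(A)}(N^{\dagger})<\infty$, using that $(-)^{\dagger}$ reverses every arrow and thus turns the ghost vanishing $\Hom(\shift^j M,f)=0$ into the coghost vanishing $\Hom(f^{\dagger},\shift^{-j}M^{\dagger})=0$. Now the converse coghost lemma established in this paper, valid for $\D^f_b$ of any non-negatively graded DG algebra with noetherian homology and hence applicable to $A$, gives $\level^{M^{\dagger}}_{\D^f_b(A)}(N^{\dagger})=\cogin^{M^{\dagger}}_{\D^f_b(A)}(N^{\dagger})<\infty$, so $\level^{M}_{\D^f_b(A)}(N)<\infty$ by the duality, and a final comparison of thick subcategories under the functors relating $\D^f_b(R)$ and $\D^f_b(A)$ forces $N\in\thick_{\D^f_b(R)}(M)$. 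Together with the local-to-global principle this completes $(2)\Rightarrow(1)$.

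The step I expect to be the main obstacle is precisely this interface between $R$ and $A$: a general local ring — even a complete one — carries no self-dual object inside $\D^f_b(R)$ itself, which is exactly why one is forced onto the Gorenstein DG algebra $A$ and must prove the converse coghost lemma in that generality rather than citing only the ring statement of Oppermann and \v{S}\'{t}ov\'{i}\v{c}ek \cite[Theorem 24]{OS}. Two things must be made to work: first, that the passage $\D^f_b(R)\to\D^f_b(A)$ is faithful enough that $M$-ghost maps over $R$ remain detectable over $A$, so that finiteness of $\gin$ genuinely ascends; and, harder, that in reverse one can recover membership in $\thick_{\D^f_b(R)}(M)$ from finiteness of level over $A$, for which one presumably exploits the noetherianity of $\h(A)$ and the way $R=\h_0(A)$ sits inside $\D^f_b(A)$. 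A secondary point still needing care is the completion reduction of the first step, where one must know that $\m$-adic completion preserves finiteness of both $\gin$ and $\level$.
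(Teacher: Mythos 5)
Your overall architecture (easy direction by localizing; reduce to a local situation; pass to a Gorenstein Koszul DG algebra where a duality swaps ghost and coghost; invoke the converse coghost lemma, Theorem \ref{t:converse}) matches the intended strategy, but the interface you choose between $R$ and your DG algebra leaves a genuine gap, and it is exactly the crux. You take $A=\kos{Q}{\f}$ for a Cohen presentation of the completed local ring and move $M,N$ into $\D^f_b(A)$ by \emph{restriction} along the augmentation $A\to \h_0(A)=R$. You then need two unproved statements which you yourself flag: (a) that $\gin^{M}_{\D^f_b(R)}(N)<\infty$ forces $\gin^{M}_{\D^f_b(A)}(N)<\infty$ for the restricted modules, and (b) that membership $N\in\thick_{\D(A)}(M)$ over $A$ descends to $N\in\thick_{\D(R)}(M)$. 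Neither follows formally, and the adjunctions go the wrong way. For (a): a map $f$ in $\D^f_b(A)$ is ghost for the restricted $M$ precisely when $\RHom_A(R,f)$ is $M$-ghost in $\D(R)$; but $\RHom_A(R,-)$ does not preserve $\D^f_b$ unless $\f$ is a regular sequence (e.g.\ $\RHom_A(R,\RHom_A(R,A))\simeq\RHom_A(R\ot_A R,A)$ is unbounded), so the transported ghost composition leaves $\D^f_b(R)$ and the hypothesis of the theorem --- which only controls ghost compositions \emph{inside} $\D^f_b(R_\p)$ --- cannot be applied. For (b): applying either adjoint of restriction only exhibits $N$ as a retract of an object built from $M\ot_A R$ (or $\RHom_A(R,M)$), whose homology is unbounded when $R$ is not complete intersection, so no conclusion about $\thick_{\D(R)}(M)$ results; this "reflection" of thick membership along $A\to R$ is not known and looks at least as hard as the theorem itself. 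The same $\D^f_b$-constraint also undercuts your completion step: ascent of $\gin_{\D^f_b}$-finiteness along $R\to\widehat R$ is not the routine adjunction argument, because objects of $\D^f_b(\widehat R)$ do not restrict into $\D^f_b(R)$.

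The fix is to go up rather than down, which is what the paper does. At each prime one uses the Koszul complex $K^{R_\p}$ on the maximal ideal of $R_\p$ together with \emph{induction} $\mathsf{t}=-\otimes_{R_\p}K^{R_\p}$: since $K^{R_\p}$ is finite free over $R_\p$, restriction preserves $\D^f_b$, the adjunction has the right variance, and $\gin^{M(\p)}_{\D^f_b(K^{R_\p})}(N(\p))\leq\gin^{M_\p}_{\D^f_b(R_\p)}(N_\p)$ is a short argument (Lemma \ref{l:koszulgin}). Completion is then harmless because $K^{R_\p}\to K^{\widehat{R_\p}}$ is a quasi-isomorphism of DG algebras, and the dualizing DG module $\Hom(K,\omega)$ interchanges ghost and coghost so that Theorem \ref{t:converse} yields $\gin=\cogin=\level$ over $K^{R_\p}$ (Lemma \ref{l:gincogin}). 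Finally, your local-to-global reduction by "spreading out and patching" over a finite basic open cover is only asserted --- the patching is itself a nontrivial descent statement --- whereas the Benson--Iyengar--Krause principle recalled in \ref{l2g} is stated precisely for the Koszul objects $M(\p),N(\p)$ and closes the argument with no descent from a Cohen-presentation Koszul complex needed.
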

One of the main steps in the proof of Theorem \ref{thmintro} is establishing  a converse coghost lemma  for graded-commutative, bounded below DG algebras $A$ with  $\h(A)$ a noetherian $\h_0(A)$-module (cf. Theorem \ref{t:converse}).
We follow the proof of  \cite[Theorem 24]{OS} closely, however,  extra care is needed when working with such DG algebras.  Namely,  we make use of certain ascending semifree filtrations, see \ref{filtration}, as the truncations used by  Oppermann and \v{S}\'{t}ov\'{i}\v{c}ek are no longer available in this setting.

\begin{ack}
We thank Srikanth Iyengar and Janina Letz for helpful discussions.  The second author was supported by NSF grant DMS 2002173.
\end{ack}

\section{Derived Category of a DG Algebra and Semifree DG Modules}

Much of this section is devoted to setting notation and reviewing the necessary background regarding the topics from the title of the section. Proposition \ref{truncation} is the main technical result of the section and will be put to use in the next section. 

Throughout this article objects will be graded homologically. By a DG algebra we will implicitly assume $A$ is  non-negatively graded and  graded-commutative. For the rest of the section fix a DG algebra $A$.

\begin{chunk}
Let $\D(A)$ denote the derived category of (left) DG $A$-modules (see,   for example, \cite[Sections 2 \& 3]{ABIM} or  \cite[Section 4]{Keller}).  We use  $\shift$ to denote the suspension functor on the triangulated category $\D(A)$ where  $\shift$ is the autoequivalence defined   by  \[(\shift M)_i\coloneqq M_{i-1},~ a\cdot(\shift m)\coloneqq (-1)^{\mid a\mid}am,\text{ and }\del^{\shift M}\coloneqq-\del^M.\] 
For a DG $A$-module $X,$ its homology  $\h(M)=\{\h_i(M)\}_{i\in \mathbb{Z}}$ is naturally a graded $\h(A)$-module.  Also, define the \emph{infimum of X} to be  $\inf(M)\coloneqq\inf \{n\mid \h_n(X)\neq 0\};$ its \emph{supremum} is  $\sup (M)\coloneqq \sup\{n\mid \h_n(M)\neq 0\}.$
\end{chunk}
\begin{chunk}
The following triangulated subcategories of $\D(A)$ will be of interest in the sequel. First,  
 let $\D^f(A)$ denote the full subcategory of $\D(A)$ consisting of DG $A$-modules $M$  such that each $\h_i(M)$ is a  noetherian    $\h_0(A)$-module. We let $\D^f_+(A)$  be the full subcategory of objects $M$ of  $\D^f(A)$ such that $\inf(M)>-\infty$. Finally, $\D^f_b(A)$ consists of those objects $M$  of $\D^f(A)$  satisfying $\h_i(M)=0$ for all $|i|\gg 0$. When $\h(A)$ is noetherian  as a module over $\h_0(A)$ and $\h_0(A)$ is noetherian, $\D^f_b(A)$ is exactly the full subcategory of $\D(A)$ whose objects $M$ are those with $\h(M)$ being finitely generated as a graded $\h(A)$-module. 
\end{chunk}

\begin{chunk}\label{semifree resolution}\label{triangle}
A DG $A$-module $F$ is  \emph{semifree} if it admits a filtration of DG $A$-submodules 
\[
\ldots \subseteq F(-1)\subseteq F(0)\subseteq F(1)\subseteq \ldots 
\] where  $F(i)=0$ for $i\ll 0$, $F=\cup F(i)$ and each 
$F(i)/F(i-1)$ is a direct sum of shifts of $A$.  The filtration above is called a \emph{semifree filtration} of $F$.\footnote{The choice to allow arbitrary indices for the start of the filtration is a non-standard one but  simplifies notation in the proof of Theorem \ref{t:converse}.}  
By \cite[Section 3]{Keller}, $F$ is homotopy colimit of the $F(i)$ and so  there is the following exact triangle in $\D(A)$
\begin{equation}\label{exact triangle}
   \coprod_{i\in \mathbb{Z}}F(i)\xra{1-s} \coprod_{i\in \mathbb{Z} }F(i)\rightarrow F\rightarrow \shift \coprod_{i\in \mathbb{Z}}F(i),
\end{equation}
 where $s$ is induced by the canonical inclusions  $F(j)\hookrightarrow F(j+1)\hookrightarrow \coprod F(i)$.
\end{chunk}

\begin{chunk} For the following background on semifree resolutions see \cite[Chapter 6]{FHT} (or \cite[Section 1.3]{IFR}). 
Let $M$ be a DG $A$-module.  There exists a surjective quasi-isomorphism of DG $A$-modules $\epsilon\colon F\xra\simeq M$ where $F$ is a semifree DG $A$-module, see \cite[Proposition 6.6]{FHT}; the map $\epsilon$  is called a \emph{semifree resolution} of $M$ over $A$. Semifree resolutions of $M$ are unique up to homotopy equivalence. 
\end{chunk}

\begin{chunk}\label{hominD(A)chunk}
Fix a DG $A$-module $M$ with semifree resolution $\e\colon F\xra{\simeq} M.$ For any DG $A$-module $N$, it is clear that
\[
\Hom_{\D(A)}(M,N)=\Hom_{\D(A)}(F,N)
\] and the right-hand side is computed as the degree zero homology of the DG $A$-module $\Hom_A(F,N).$ That is, 
\begin{equation}\label{hominD(A)}
\Hom_{\D(A)}(M,-)=\h_0(\Hom_A(F,-)).
\end{equation}In particular, $\Hom_{\D(A)}(M,N)$ naturally inherits an  $\h_0(A)$-module structure and since $A$ is non-negatively graded, $\Hom_{\D(A)}(M,N)$ inherits an $A_0$-module structure. As semifree resolutions are homotopy equivalent, this $\h_0(A)$-module is independent of choice of semifree resolution. 
\end{chunk}

\begin{chunk}\label{filtration} Assume  each $\h_i(A)$ is finitely generated over $\h_0(A)$ and $\h_0(A)$ is itself noetherian. 
Let $M$ be an object of $\D^f_+(A)$. By \cite[Appendix B.2]{AINSW}, there exists a semifree resolution $F\xra\simeq M$ with $F_i=0$ for all $i<\inf(M)$ and $F$ admits a semifree filtration $\{F(i)\}_{i\in \mathbb{Z}}$ equipped with exact sequences of DG $A$-modules  
\[
0\to F(i-1)\to F(i)\to \shift^{i} A^{\beta_i}\to 0
\] for some non-negative integer $\beta_i\geq 0$.
\end{chunk}

\begin{lemma}\label{null Lemma} Assume $\h_0(A)$ is  noetherian  and that each $\h_i(A)$ is finitely generated over it.
    Let $N$ be an object of $\D(A)$ such that $\sup(N)<\infty$. For an object $M$ in $\D(A)$ with $\inf(M)> \sup(N)$,
$    \Hom_{\D(A)}(M,N)=0.$
\end{lemma}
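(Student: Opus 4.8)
The plan is to compute $\Hom_{\D(A)}(M,N)$ via a semifree resolution of $M$ and show that it vanishes for degree reasons. First I would replace $M$ by a semifree resolution $\e\colon F\xra{\simeq} M$. The key point is to choose $F$ concentrated in degrees $\geq \inf(M)$: since the hypotheses include that $\h_0(A)$ is noetherian and each $\h_i(A)$ is finitely generated over it, I can invoke the construction in \ref{filtration} (via \cite[Appendix B.2]{AINSW}) — or even just a minimal-type semifree resolution — to get $F_i=0$ for all $i<\inf(M)$. By \ref{hominD(A)chunk}, $\Hom_{\D(A)}(M,N)=\h_0(\Hom_A(F,N))$, so it suffices to show that every degree-zero cycle of $\Hom_A(F,N)$ is a boundary, or more simply that $\Hom_A(F,N)_0=0$ already at the chain level, which would follow if the relevant graded pieces vanish.

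Next I would unwind what $\Hom_A(F,N)_0$ is: an element is a family of $A_0$-linear (degree-zero) maps $F_i\to N_i$ for all $i$. Since $F_i=0$ for $i<\inf(M)$ and, writing $s\coloneqq\sup(N)$, we have $N_i=0$ for $i>s$ together with $\h_i(N)=0$ for $i>s$ — wait, more carefully, $\sup(N)<\infty$ only controls homology, not the underlying module. So the naive degree argument needs the resolution, not $N$, to be truncated. The cleaner route: use the exact triangle \eqref{exact triangle} associated to a semifree filtration of $F$, reducing to the case $F=\shift^{a}A$ for $a\geq \inf(M)>\sup(N)$ after a homotopy colimit argument; but homotopy colimits interact with $\Hom$ via a $\lim^1$ sequence, which is a nuisance. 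Instead I would argue directly: using the filtration $\{F(i)\}$ with $F(i)=0$ for $i\ll 0$ and subquotients free, an easy induction shows $\Hom_{\D(A)}(F(i),N)=0$ for all $i$ — the base case is trivial, and the inductive step uses the exact triangle $F(i-1)\to F(i)\to \shift^{?}A^{\beta}\to$ together with $\Hom_{\D(A)}(\shift^{j}A,N)=\h_{-j}(N)=0$ whenever $j\geq \inf(M)>\sup(N)$ forces the vanishing. Then passing to the homotopy colimit via \eqref{exact triangle}, the $\coprod F(i)$ terms also have vanishing $\Hom$ into $N$ (a coproduct of objects with vanishing $\Hom$), and the long exact sequence gives $\Hom_{\D(A)}(F,N)=0$.

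The main obstacle I anticipate is the bookkeeping around the homotopy colimit: $\Hom_{\D(A)}(-,N)$ does not commute with the coproduct $\coprod_i F(i)$ on the nose, so I must be careful that $\Hom_{\D(A)}(\coprod_i F(i),N)\cong \prod_i\Hom_{\D(A)}(F(i),N)$, and each factor vanishes by the induction; the triangle \eqref{exact triangle} then yields $\Hom_{\D(A)}(\shift^{-1}F,N)=0$ hence $\Hom_{\D(A)}(F,N)=0$. An alternative that sidesteps the homotopy colimit entirely — and is probably what I would actually write — is to use the truncated resolution from \ref{filtration} directly: $\Hom_A(F,N)$ has its degree-$n$ component equal to $\prod_i \Hom_{A_0}(F_i,N_{i+n})$-type data, and since $F$ is semifree with $F_i=0$ for $i<\inf(M)$ while a semifree resolution of $N$ (or $N$ itself replaced by something with $N_i=0$ for $i>\sup(N)$, which one arranges by replacing $N$ with a suitable truncation that is quasi-isomorphic — here one uses $\sup(N)<\infty$) has $N_i=0$ for $i>\sup(N)$, every component of $\Hom_A(F,N)$ in degrees near $0$ vanishes because $\inf(M)>\sup(N)$. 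I would present whichever of these is shortest given the conventions already fixed in the paper; the triangle-and-induction version is the most robust and uses only \eqref{exact triangle} and \ref{filtration}, both available.
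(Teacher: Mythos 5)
Your proposal is correct and follows essentially the same route as the paper: take the semifree resolution of \ref{filtration}, show $\Hom_{\D(A)}(F(i),N)=0$ by induction along the filtration using $\Hom_{\D(A)}(\shift^{j}A,N)\cong \h_{j}(N)=0$ for $j\geq \inf(M)>\sup(N)$, and then pass to $F$ via the homotopy colimit triangle \eqref{exact triangle}, noting that $\Hom$ out of the coproduct is the product of the vanishing groups. The only blemish is a sign slip ($\h_{-j}(N)$ should be $\h_{j}(N)$ in the paper's homological conventions), which does not affect the argument.
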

\begin{proof}
Fix a semifree resolution  $F\xra\simeq M$ as in \ref{filtration}. 
By (\ref{hominD(A)}) in \ref{hominD(A)chunk}, 
\[  \Hom_{\D(A)}(\shift^{i}A^{\beta_i},N)\cong \h_{i}(N)^{\beta_i}=0\]
for each $i\geq \inf(M).$
Combining these isomorphisms with the  exact sequences 
 \[ 0\rightarrow F(i-1)\rightarrow F(i)\rightarrow \shift^{i}A^{\beta_i}\rightarrow 0\]
yields by induction that 
 $\Hom_{\D(A)}(F(i),N)=0$ for all $i\geq \inf(M)$. Finally, (\ref{exact triangle}) in \ref{triangle}
implies $\Hom_{\D(A)}(F,N)=0$, and hence  $\Hom_{\D(A)}(M,N)=0$ (cf.  \ref{hominD(A)chunk}). 
\end{proof}

\begin{proposition}\label{truncation}
Assume $\h_0(A)$ is noetherian  and each $\h_i(A)$ is a finitely generated  $\h_0(A)$-module.
Let  $M$ be in $\D^f_+(A)$ and $N$ be an object in $\D(A)$ such that $\sup(N)<\infty$. Suppose $F\xra{\simeq} M$ is a semifree resolution of $M$ as in \ref{filtration}, then for all $i> \sup(N)$ the natural map below is an isomorphism
\[
\Hom_{\D(A)}(M,N)\xra{\cong} \Hom_{\D(A)}(F(i),N).
\]
\end{proposition}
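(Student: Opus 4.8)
The plan is to identify the natural map with restriction along the inclusion $\iota\colon F(i)\hookrightarrow F$ and then to study the cokernel DG module $F/F(i)$. Under the identification $\Hom_{\D(A)}(M,N)=\Hom_{\D(A)}(F,N)$ of \ref{hominD(A)chunk}, the map in the statement is exactly $\iota^{*}\colon\Hom_{\D(A)}(F,N)\to\Hom_{\D(A)}(F(i),N)$, so it suffices to build an exact triangle on $F(i)$, $F$, and $F/F(i)$ and to prove that $\Hom_{\D(A)}(F/F(i),N)$ and $\Hom_{\D(A)}(\shift^{-1}(F/F(i)),N)$ both vanish.

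First I would analyze $F/F(i)$ via the given filtration. From the semifree filtration $\{F(k)\}_{k\in\mathbb Z}$ of \ref{filtration}, the DG module $F/F(i)$ is semifree with filtration $\{F(k)/F(i)\}_{k\ge i}$: the union over $k$ is $F/F(i)$, $F(i)/F(i)=0$, and for $k>i$ there are short exact sequences
\[
0\to F(k-1)/F(i)\to F(k)/F(i)\to\shift^{k}A^{\beta_{k}}\to 0 .
\]
Because $A$ is non-negatively graded, $\inf(\shift^{k}A^{\beta_{k}})\ge k\ge i+1$, and then an induction on $k$ through the homology long exact sequences of these short exact sequences yields $\h_{\ell}(F(k)/F(i))=0$ for all $\ell\le i$. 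Moreover, for each fixed $\ell$ the maps $\h_{\ell}(F(k)/F(i))\to\h_{\ell}(F(k+1)/F(i))$ are isomorphisms as soon as $k>\ell$, so the filtered union is computed in each degree by a stable stage; hence $\h_{\ell}(F/F(i))=0$ for $\ell\le i$, while each $\h_{\ell}(F/F(i))$ is noetherian over $\h_{0}(A)$. In particular $\inf(F/F(i))\ge i+1$ and $F/F(i)\in\D^{f}_{+}(A)$, so Lemma \ref{null Lemma} applies to it.

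Next I would move to $\D(A)$. Since $F/F(i)$ is semifree, the short exact sequence $0\to F(i)\xrightarrow{\iota}F\to F/F(i)\to 0$ is split over the underlying graded algebra, so it determines an exact triangle
\[
F(i)\xrightarrow{\ \iota\ }F\to F/F(i)\to\shift F(i)
\]
in $\D(A)$. Applying $\Hom_{\D(A)}(-,N)$ produces a long exact sequence in which $\ker\iota^{*}$ is a quotient of $\Hom_{\D(A)}(F/F(i),N)$ and $\coker\iota^{*}$ is a submodule of $\Hom_{\D(A)}(\shift^{-1}(F/F(i)),N)$. The hypothesis $i>\sup(N)$ gives $\inf(F/F(i))\ge i+1>\sup(N)$ and $\inf(\shift^{-1}(F/F(i)))=\inf(F/F(i))-1\ge i>\sup(N)$, so Lemma \ref{null Lemma} forces both of these $\Hom$-groups to be zero. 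Therefore $\iota^{*}$ is an isomorphism, which is the claim.

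The step I expect to be the main obstacle is the middle one: pinning down that the filtration induced on $F/F(i)$ has successive subquotients exactly $\shift^{k}A^{\beta_{k}}$ with $k>i$, and handling the passage from the finite stages $F(k)/F(i)$ to the union $F/F(i)$ — both for the vanishing $\h_{\ell}(F/F(i))=0$ in low degrees and for verifying $F/F(i)\in\D^{f}_{+}(A)$ so that Lemma \ref{null Lemma} may be invoked. Once $\inf(F/F(i))\ge i+1$ is in hand, the remainder is a formal diagram chase.
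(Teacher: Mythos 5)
Your argument is correct and is essentially the paper's proof: the paper likewise applies $\Hom_{\D(A)}(-,N)$ to the short exact sequence $0\to F(i)\to F\to F/F(i)\to 0$, notes $\inf(F/F(i))>i$, and invokes Lemma \ref{null Lemma} to kill the two outer terms of the resulting long exact sequence. The only difference is that your verification of $\inf(F/F(i))\geq i+1$ (inductive long exact sequences plus a stabilization argument for the union) is more elaborate than necessary, since by the choice of filtration in \ref{filtration} the underlying graded module of $F/F(i)$ already vanishes in degrees $\leq i$.
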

\begin{proof}
For each $i\geq \inf(M)$, there is an exact sequence of DG $A$-modules 
\begin{equation}\label{exact sequence}
0\rightarrow F(i)\rightarrow F\rightarrow F'\rightarrow 0
\end{equation}
 where by choice of $F$ we have that 
$\inf(F')>  i$. Applying $\Hom_{\D(A)}(-,N)$ to (\ref{exact sequence}) and appealing to  Lemma  \ref{null Lemma} yields the desired isomorphisms whenever  $i> \sup (N).$
\end{proof}

\section{Levels and Coghost Index in $\D(A)$}
\label{sectioncoghost}
We begin by briefly recalling the notion of level. For more details,  see  \cite[Section 2]{ABIM}, \cite[Section 2]{BonBer} or \cite[Section 3]{Rouq}.
\begin{chunk}\label{construction}\label{level}
Let $\T$ be a triangulated category and $\C$ be a full subcategory of $\T$. We say $\C$ is  \emph{thick} if it is closed under suspensions, retracts and cones. 
The smallest thick subcategory of $\T$ containing an object $X$ is denoted $\thick_\T (X);$ this consists of all objects  $Y$ such that one can obtain $Y$ from $X$ using finitely many suspensions, retracts and cones. 

We set $\level_\T^X (Y)$ to be smallest non-negative integer $n$ such that  $Y$ can be built starting from $X$ using finitely many suspensions, finitely many retracts and exactly $n-1$ cones in $\T.$  If no such $n$ exists, we set $\level_\T^X(Y)=\infty$. 
Note $Y$ is in $\thick_\T (X)$ if and only if $\level_\T^X (Y)<\infty$. Also,  if $\C$ is a thick subcategory of $\T$ containing $X$, then
$
\level^X_\T (Y)=\level^X_\C (Y).$
\end{chunk}

\begin{example}\label{example}
Let $A$ be a DG algbera. A DG $A$-module $M$ is \emph{perfect} if $M$ is an object of $\thick_{\D(A)}(A).$ In this case, $M$ is a retract of  a semifree DG $A$-module $F$ with finite semifree filtration
\[
0\subseteq F(0)\subseteq F(1)\subseteq \ldots \subseteq F(n)=F.
\] If $n$ is the minimal such value, then $\level_{\D(A)}^A (M)=n-1$, see \cite[Theorem 4.2]{ABIM}.
\end{example}

\begin{chunk}\label{coghost}
 Let $\T$ be a triangulated category with suspension functor $\shift$. A morphism $f\colon X\rightarrow Y$ in $\T$ is called $G$-\emph{coghost} if 
\[
\Hom_\T(f,\shift^iG) \colon \Hom_\T(Y,\shift^iG)\rightarrow \Hom_\T(X,\shift^iG)
\]
is zero for all $i\in \Z$.  
Following \cite[Defnition 2.4]{Letz}, we define the \emph{coghost index of $X$ with respect to $G$ in $\T$}, denoted $\cogin_T^G(X)$, to be the smallest non-negative integer $n$ such that any  composition of $G$-ghost maps 
\[
X^n\xra{f^n}X^{n-1}\xra{f^{n-1}}\ldots\to X^1 \xra{f^1} X^0=X
\] is zero in $\T.$
\end{chunk}
\begin{chunk}
Let $\T$ be a triangulated category with objects $G$ and $X$. 
In this generality, $\level$ bounds  $\cogin$ from above. That is,  
\[ \cogin_{\T}^G(X)\leq \level_\T^G(X),\]
see \cite[Lemma 2.2(1)]{Beli} (see also  \cite[Lemma 4.11]{Rouq}). However, there are known instances when equality holds. For example, $\level^G_\T (-)$ and  $\cogin_\T^G(-)$ agree provided  every object in $\T$ has an appropriate  left approximation by $G$, see   \cite[Lemma 2.2(2)]{Beli} for more details. Another instance   is when  $R$ is a commutative noetherian ring (or more generally, a noether algebra)  
\[
\cogin_{\D^f_b(R)}^G (X)=\level_{\D^f_b(R)}^G(X)
\] for each $G$ and $X$ in $\D^f_b(R);$ this has been coined the \emph{converse coghost lemma} (see \cite[Theorem 24]{OS}). 
\end{chunk}

We now get to the main result of the section which generalizes a particular case of \cite[Theorem 24]{OS} mentioned above. It is worth noting that   \cite[Theorem 24]{OS} was proved for  derived categories satisfying certain finiteness conditions; however, it does not apply directly to the case considered in the theorem below. The proof of \cite[Theorem 24]{OS}  is suitably adapted to the setting under consideration with the main  observation being that  truncations need to be replaced with the ascending filtrations discussed in \ref{filtration}. We have indicated the necessary changes below, while attempting to not recast the parts of the proof of \cite[Theorem 24]{OS} that carry over with only minor changes.

\begin{theorem}\label{t:converse}
 Let $A$ be a DG algebra with  $\h(A)$ noetherian as an $\h_0(A)$-module.  If  $M$ and $N$ are in $\D^f_b(A)$, then 
\[
\cogin_{\D^f_b(A)}^M(N)=\level_{\D(A)}^M(N).
\]
\end{theorem}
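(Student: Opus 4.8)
The plan is to follow the strategy of Oppermann--\v{S}\'{t}ov\'{i}\v{c}ek \cite[Theorem 24]{OS}, adapting it to the DG setting. Since the inequality $\cogin_{\D^f_b(A)}^M(N)\leq \level_{\D(A)}^M(N)$ is the ``easy'' direction of the coghost lemma (and $\level$ can be computed in $\D(A)$ or $\D^f_b(A)$ interchangeably by \ref{level}), it suffices to produce, given that $\level_{\D(A)}^M(N)\geq n+1$, a composition of $n$ nonzero $M$-coghost maps ending at $N$. So I would argue the contrapositive: assuming every composition of $n$ $M$-coghost maps into $N$ vanishes, I want to conclude $\level^M_{\D(A)}(N)\leq n$.

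First I would set up the main technical device. By shifting, one may assume $\inf(M)=0$ and normalize $N$ so that $\sup(N)$ and $\inf(N)$ are controlled; replace $M$ by a semifree resolution $F\xra{\simeq}M$ of the form in \ref{filtration}, with the ascending semifree filtration $\{F(i)\}$ and short exact sequences $0\to F(i-1)\to F(i)\to \shift^i A^{\beta_i}\to 0$. The key point, supplied by Proposition \ref{truncation}, is that for $i>\sup(N')$ we have $\Hom_{\D(A)}(F(i),N')\xra{\cong}\Hom_{\D(A)}(M,N')$; this is the replacement for the hard truncation $M_{\leq s}$ used in \cite{OS}, which is not available over a DG algebra. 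Thus $F(i)$ plays the role of a ``finite perfect approximation'' of $M$ from below that detects all maps into any fixed object $N'$ with small enough supremum.

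The heart of the argument is an inductive construction, exactly paralleling \cite{OS}: one builds a tower $N=N_0\xleftarrow{f_1}N_1\xleftarrow{f_2}N_2\leftarrow\cdots$ of objects in $\D^f_b(A)$ together with $M$-coghost maps $f_j$, where at each stage $N_{j+1}$ is obtained by taking the cone (up to a shift) of a map from a suitable finite direct sum of shifts of $F(i_j)$'s onto $N_j$ realizing a generating set of $\Hom_{\D(A)}^*(F(i_j),N_j)$, so that the connecting map $N_{j+1}\to N_j$ kills all cohomology classes valued in shifts of $M$ and is therefore $M$-coghost. One must check that this process keeps the objects in $\D^f_b(A)$ (using that $\h(A)$ is noetherian over $\h_0(A)$, so $\h(N_j)$ stays finitely generated) and that the supremum of $N_j$ is eventually pushed above the thresholds needed to invoke Proposition \ref{truncation}; the boundedness of $N$ and the fact that each step attaches only finitely many cells in a controlled range of degrees is what makes this terminate in a useful way. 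By hypothesis the composite $f_1\circ\cdots\circ f_n\colon N_n\to N$ is zero, which by the octahedral/standard ``ghost pushout'' manipulation forces $N$ to be a retract of an object built from $M$ in $n$ steps, giving $\level^M_{\D(A)}(N)\leq n$.

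I expect the main obstacle to be the bookkeeping that replaces the truncation functor: in \cite{OS} one freely uses $(-)_{\leq s}$ and $(-)_{\geq s}$ and the fact that $\Hom(M,N)=\Hom(M_{\leq \sup N},N)$, whereas here I only have the one-sided statement of Proposition \ref{truncation} together with the exact triangles coming from the semifree filtration. Making sure that (a) the $F(i)$'s chosen at successive stages are coherent (one needs $i_{j+1}\geq i_j$ and each $i_j>\sup(N_{j-1})$), (b) the relevant $\Hom^*$-modules are finitely generated so only finitely many shifts of $F(i_j)$ are attached, and (c) the induced maps are genuinely $M$-coghost and not merely ``coghost after truncation'', is where the real care is needed. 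Everything else is a faithful transcription of \cite[Theorem 24]{OS}, with Lemma \ref{null Lemma} and Proposition \ref{truncation} standing in for the vanishing and truncation facts used there.
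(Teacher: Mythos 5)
Your plan---run the Oppermann--\v{S}\'{t}ov\'{i}\v{c}ek construction with Proposition \ref{truncation} standing in for truncations---is close in spirit, but the construction you describe has genuine gaps. First, there is a variance error: attaching a cone along a map $\bigoplus_k\shift^{a_k}F(i_j)\to N_j$ realizing generators of $\Hom^*_{\D(A)}(F(i_j),N_j)$ kills maps \emph{from} $F(i_j)$ (hence from $M$), i.e.\ it produces an $M$-\emph{ghost} map out of $N_j$, not an $M$-coghost map into $N_j$; to kill classes ``valued in shifts of $M$'' you must instead map $N_j$ into a sum of shifts of $M$ generating $\Hom^*_{\D(A)}(N_j,M)$ and pass to the fibre. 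Second, once this is corrected, $\bigoplus_i\Hom_{\D(A)}(N_j,\shift^iM)$ is in general nonzero in infinitely many degrees (think of $\Ext^*_R(k,k)$ over a singular local ring), so the universal coghost map requires an unbounded, only bounded-below, sum of shifts of $M$: the tower cannot be kept in $\D^f_b(A)$, which is exactly the obstruction your item (b) hopes to wave away. Third, even granting your tower, the cofibres of your maps are sums of shifts of the $F(i_j)$, which are perfect over $A$ but not built from $M$ in one step, so the concluding retract/octahedron step would not bound $\level^M_{\D(A)}(N)$.

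The paper's proof has a different architecture that resolves precisely these points. It first notes (Remark \ref{remarksameproof}) that the OS argument applies verbatim in the larger category $\D^f_+(A)$, giving $\level^M_{\D(A)}(N)=\cogin^M_{\D^f_+(A)}(N)$; the actual content of the theorem is then the inequality $\cogin^M_{\D^f_+(A)}(N)\le\cogin^M_{\D^f_b(A)}(N)$. For that, one takes an arbitrary chain of $M$-coghost maps $N^n\to\cdots\to N^0=N$ in $\D^f_+(A)$, lifts it to semifree resolutions with the ascending filtrations of \ref{filtration}, restricts to filtration pieces $F^i(i_j)$ via Lemma \ref{compatible}, and uses Proposition \ref{truncation}---with the bounded objects $M$ and $N$ as \emph{targets}---to check that the restricted chain consists of $M$-coghost maps between perfect DG modules in $\D^f_b(A)$ and that its composite into $N$ is identified with the original composite. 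In other words, the filtration pieces approximate the tower objects, not $M$; that role-reversal, together with working in $\D^f_+(A)$ rather than forcing the tower into $\D^f_b(A)$, is what your proposal is missing, and without it the argument does not go through.
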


\begin{remark}\label{remarksameproof}
For $M$ and $N $ in $\D_b^f(R)$,
\[
\cogin_{\D_+^f(A)}^M(N)=\level_{\D_+^f(A)}^M(N). 
\]
Indeed, one can directly apply the argument from 
\cite[Theorem 24]{OS}  once it is noted that, by restricting scalars along the  map  of commutative rings $A_0\to \h_0(R)$, $\Hom_{\D_+^f(A)}(X,\shift^iN)$ is finitely generated over  $A_0$ for  $X$ in $\D_+^f(A)$ and  $i\in \mathbb{Z}$. 

To see the latter holds, 
 such an $X$ admits a semifree filtration whose subquotients are perfect DG $A$-module. Also, since $N$ is in $\D^f_b(A)$ we can apply Proposition \ref{truncation} to get
\[\Hom_{\D_+^f(A)}(X,\shift^iN)\cong  \Hom_{\D_+^f(A)}(P,\shift^iN)\]
where $P$ a perfect DG $A$-module with a finite semifree filtration as in Example \ref{example}. Therefore, induction on the length of this filtration finishes the proof of the claim, where we are again using that $N$ is in $\D^f_b(A)$.
\end{remark}
Before beginning the proof of Theorem \ref{t:converse}, we record  an  easy but important lemma. 

\begin{lemma}\label{compatible}Let $A$ be a DG algebra.
    Assume $\alpha\colon F^1\to F^2$ is a morphism of bounded below semifree DG $A$-modules with  $F^j_i=0$ for $i<\inf (F^j)$ and semifree filtrations $\{F^j(i)\}_{i\in \mathbb{Z}}$ for $j=1,2$  satisfying
    \[
    0\to F^j(i-1)\to F^j(i)\to \coprod_{\ell\leq i}\shift^\ell A^{\beta^j_\ell(i)}\to 0
    \] for non-negative integers $\beta^j_\ell(i)$ and $j=1,2.$ For each $i\in \mathbb{Z}$, $\alpha$ restricts to a morphism of DG $A$-modules $
    \alpha(i)\colon F^1(i)\to F^2(i).
    $
\end{lemma}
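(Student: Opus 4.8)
The claim is that a morphism $\alpha\colon F^1 \to F^2$ of bounded-below semifree DG modules, each equipped with the specified kind of semifree filtration, restricts to maps on the filtration pieces $F^j(i)$. The key structural fact to exploit is that $F^j(i)$ is built out of shifts $\shift^\ell A$ with $\ell \le i$, so $F^j(i)$ is concentrated in (underlying graded-module) degrees $\ge \inf(F^j)$ and is generated by a semibasis whose elements sit in degrees $\le i$ — more precisely, $F^j(i)$ equals the DG $A$-submodule of $F^j$ generated by all semibasis elements of degree $\le i$. I would make this description precise first: fix a semibasis $E^j = \bigsqcup_i E^j(i)$ of $F^j$ adapted to the filtration, where $E^j(i)$ is the set of semibasis elements of degree exactly $i$, so that $F^j(i) = A\cdot\big(\bigsqcup_{\ell \le i} E^j(\ell)\big)$ as a graded $A$-module, with the differential restricting (this is exactly the content of the given short exact sequences, unwound).

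Next I would argue that $\alpha(F^1(i)) \subseteq F^2(i)$. Since $\alpha$ is $A$-linear, it suffices to check $\alpha(e) \in F^2(i)$ for each semibasis element $e \in E^1(\ell)$ with $\ell \le i$. Now $\alpha(e)$ is a homogeneous element of $F^2$ of degree $\ell \le i$. The point is that $F^2$ in degrees $\le i$ is contained in $F^2(i)$: writing any homogeneous element of $F^2$ as an $A$-combination of semibasis elements, and noting $A$ is non-negatively graded so that a combination of degree $\le i$ can only involve semibasis elements of degree $\le i$, we get that the degree-$\le i$ part of $F^2$ lies in $A\cdot\big(\bigsqcup_{\ell\le i}E^2(\ell)\big) = F^2(i)$. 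Hence $\alpha(e)\in F^2(i)$, so $\alpha(F^1(i))\subseteq F^2(i)$, and $\alpha(i) := \alpha|_{F^1(i)}$ is a well-defined morphism of DG $A$-modules (it is $A$-linear and commutes with the differential because $\alpha$ does and the differentials on $F^j(i)$ are restrictions).

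The main obstacle — really the only subtlety — is justifying that $F^j(i)$ coincides with the submodule generated by the low-degree part of a semibasis, i.e.\ translating the abstract short exact sequences $0\to F^j(i-1)\to F^j(i)\to \coprod_{\ell\le i}\shift^\ell A^{\beta^j_\ell(i)}\to 0$ into the concrete "generated by semibasis elements of degree $\le i$" description, and in particular verifying that $F^j$ has no nonzero elements of degree $< \inf(F^j)$ in a way compatible with the filtration (this is where the hypothesis $F^j_i = 0$ for $i < \inf(F^j)$, together with non-negative grading of $A$, is used). Once that description is in hand, the degree-counting argument above is immediate, and the compatibility of $\alpha(i)$ with the filtration inclusions $F^j(i-1)\hookrightarrow F^j(i)$ follows since all the maps involved are restrictions of $\alpha$.
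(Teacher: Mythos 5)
Your outline is the same argument as the paper's proof: $F^1(i)$ is generated in degrees at most $i$, $\alpha$ preserves degrees, and one wants the degree-at-most-$i$ part of $F^2$ to lie in $F^2(i)$, so that $\alpha(F^1(i))\subseteq F^2(i)$ and one may set $\alpha(i)=\alpha|_{F^1(i)}$. The difficulty is exactly the step you defer, namely that $F^2(i)$ is the $A$-submodule generated by all semibasis elements of degree at most $i$ (equivalently, that every homogeneous element of $F^2$ of degree at most $i$ lies in $F^2(i)$). Under the hypotheses as literally stated this is not available: the displayed exact sequences only force the generators adjoined at stage $i$ to have degrees $\leq i$, and nothing prevents generators of small degree from appearing at arbitrarily late stages of the filtration. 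For instance, with $A=k$ a field, $F^1=F^2=k$ concentrated in degree $0$ and $\alpha=\id$, take $F^1(i)=k$ for $i\geq 0$ and $F^2(i)=k$ only for $i\geq 1$ (the only nonzero multiplicity being $\beta^2_0(1)=1$, with $\ell=0\leq 1$ permitted); then $\alpha(F^1(0))=k$ is not contained in $F^2(0)=0$. So the identification you plan to ``make precise first'' cannot be proved in this generality, and your degree count, while correct, lands you on semibasis elements that need not lie in $F^2(i)$.

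What saves the argument --- and what the paper's own terse proof is implicitly using as well --- is that where the lemma is applied (the proof of Theorem \ref{t:converse}) the filtrations are those of \ref{filtration}, whose stage-$i$ subquotient is $\shift^{i}A^{\beta_i}$ and hence is concentrated in degrees $\geq i$. With that hypothesis on the target, your missing step is a short downward induction: if $x\in F^2$ is homogeneous of degree $d$ and $x\in F^2(j)$ with $j>d$, then its image in $F^2(j)/F^2(j-1)\cong \shift^{j}A^{\beta_j}$ lies in degree $d<j$ and so vanishes, whence $x\in F^2(j-1)$; iterating gives $x\in F^2(d)\subseteq F^2(i)$ for all $i\geq d$. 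Note that for the source only ``generated in degrees at most $i$'' is needed, so the condition $\ell\leq i$ is harmless there; it is the target filtration that must not acquire low-degree generators late. To complete your proof, either impose this condition on the filtration of $F^2$ (it holds in the intended application) or prove the containment of the degree-$\leq i$ part of $F^2$ in $F^2(i)$ by the induction just described under that added hypothesis.
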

\begin{proof}
Indeed, $F^1(i)=0$ for all $i<\inf(F^1)$ and so there is nothing to show for such values of $i$. Now for $i\geq \inf (F^1)$, the DG $A$-module $F^1(i)$ is generated in degrees at most $i$ and since $\alpha$ is degree preserving $\alpha(F^1(i))$ is generated in degrees at most $i.$ However,  the assumption on the filtration $\{F^2(j)\}$ also implies 
\[
\alpha(F^1(i))\subseteq F^2(i).
\] Hence,  setting $\alpha(i)\coloneqq \alpha\mid_{F^1(i)}$ proves the claim by induction. 
\end{proof}

\begin{proof}[Proof of Theorem \ref{t:converse}]
First, by  \ref{level} and  Remark \ref{remarksameproof}  \[\level_{\D(A)}^M(N)=\level_{\D^f_+(A)}^M(N)=\cogin_{\D_+^f(A)}^M(N),\] while the inequality  \begin{equation}\label{equationtoshow}\cogin_{\D_+^f(A)}^M(N)\geq \cogin_{\D^f_b(A)}^M(N)\end{equation} is standard. So it suffices to prove  the reverse inequality of (\ref{equationtoshow}) holds.

Set $n=\cogin_{\D^f_b(A)}^M(N)$ and consider a composition
\[
N^n\xrightarrow {f^n}N^{n-1}\xrightarrow{f^{n-1}} \ldots\xrightarrow{f^2} N^1\xrightarrow{f^1} N^0=N,
\]
where each $f^i$ is a $M$-coghost map in $\D_+^f(A)$. 
Using the assumptions on $\h(A)$ and that  each $N^i$ is in $\D_+^f(A)$,
there exist semifree resolution $F^i\xra{\simeq} N^i$ with corresponding semifree filtrations $\{F^i(j)\}_{j\in \mathbb{Z}}$ as in \ref{filtration}. Moreover, by \ref{hominD(A)chunk}(\ref{hominD(A)}), each $f^i$ determines a morphism of DG $A$-modules $\alpha^i\colon F^i\to F^{i-1}$ such that the following diagram commutes in $\D(A)$
\begin{equation}\label{diagram}
\begin{tikzcd}
F^i\arrow["\alpha^i"]{r} \arrow["\simeq"]{d} & F^{i-1}\arrow["\simeq"]{d}\\
N^i\arrow["f^i"]{r} & N^{i-1}.
\end{tikzcd}
\end{equation}
Furthermore, by Lemma \ref{compatible} there are the following commutative diagrams of DG $A$-modules
\begin{equation}\label{diagram2}
    \begin{tikzcd}
     F^{i}(j)\arrow[hookrightarrow]{d} \arrow["\alpha^i(j)"]{r} &F^{i-1}(j)\arrow[hookrightarrow]{r} \arrow[hookrightarrow]{dr} &F^{i-1}(j')\arrow[hookrightarrow]{d}\\
    F^i\arrow["\alpha^i"]{rr} & & F^{i-1}
    \end{tikzcd}
\end{equation} whenever $j'\geq j.$  Moreover, since each $F^i(j)$ is a perfect DG $A$-module and  $M$ is in $\D^f_b(A)$,   the commutativity of the diagrams in (\ref{diagram}) and the assumption that each  $f^i$ is $M$-coghost imply     the compositions along the top of (\ref{diagram2}) are $M$-coghost for all  $j'\geq j\gg 0;$ the same argument as in  proof of \cite[Theorem 24]{OS} works in this setting. 

Combining this  with Proposition \ref{truncation} there exists integers $i_j$ such that
\begin{center}
    \begin{tikzcd}
    F^n(i_n)\arrow["\beta^n"]{r} \arrow[hookrightarrow]{d} & F^{n-1}(i_{n-1})\arrow["\beta^{n-1}"]{r} \arrow[hookrightarrow]{d} & \ldots \arrow["\beta^1"]{r} & F^0(i_0)\arrow[hookrightarrow]{d}\\
      F^n\arrow["\alpha^n"]{r} \arrow["\simeq"]{d} & F^{n-1}\arrow["\alpha^{n-1}"]{r} \arrow["\simeq"]{d} & \ldots \arrow["\alpha^1"]{r} & F^0\arrow["\simeq"]{d}\\
    N^n\arrow["f^n"]{r} & N^{n-1}\arrow["f^{n-1}"]{r}  & \ldots \arrow["f^1"]{r} & N^0
    \end{tikzcd}
\end{center} commutes in $\D(A)$, the natural map 
\begin{equation}\label{isohom}
\Hom_{\D_+^f(A)}(F^n,N)\xra{\cong} \Hom_{\D_+^f(A)}(F^n(i_n),N)
\end{equation} is an isomorphism and each $\beta^i$ is $M$-coghost. 
Now since each $\beta^i$ is an $M$-coghost map between perfect DG $A$-modules then by choice of $n$ the composition along the top and then down to $N$, denoted $\beta,$ must be zero. It is worth noting that the previous step needs the assumption that $\h(A)$ is finitely generated over $\h_0(A)$ since, in this case, each map in the  composition defining $\beta$ must be in  $\D^f_b(A).$

Finally, the isomorphism in (\ref{isohom}) identifies $\beta$ with $f=f^1f^2\ldots f^n$. 
Hence,  $f=0$ and so  $\cogin_{\D_+^f(A)}^M(N)\leq n=\cogin_{\D^f_b(A)}^M(N),$ as needed. 
\end{proof}

\begin{chunk}\label{ghostchunk}Let $\T$ be a triangulated category and fix $G$ and $X$ in $\T.$ The 
 \emph{ghost index of $X$ with respect to $G$ in $\T$}, denoted $\gin_T^G (X)$, to be the least non-negative integer $n$ such that any  composition of $G$-ghost maps
\[
X=X^n\xra{f^n}X^{n-1}\xra{f^{n-1}}\ldots\to X^1 \xra{f^1} X^0
\] is zero in $\T;$ this where a map $g$  is $G$-\emph{ghost} provided $\Hom_\T(\shift^i G, g)=0$ for all $i\in \mathbb{Z}$. That is, $\gin_\T^G(X)=\cogin_{\T^{op}}^G(X).$  In general, 
$\gin_{\T}^G(X)\leq \level_\T^G(X)$ and it is unknown whether equality holds when $R$ is a commutative noetherian ring and $\T=\D^f_b(R)$. The point of the next section is to provide  a partial ``converse."
\end{chunk}

\section{A Partial Converse Ghost Lemma}
\label{sectionmain}

In this section $R$ is a commutative noetherian ring. As localization defines  an exact functor $\D(R)\to \D(R_\p)$,  $\level$ cannot increase upon localization. Hence,  for $M$ and $N$ in $\D^f_b(R)$, if $N$ is in $\thick_{\D(R)}(M)$, then \[\gin_{\D^f_b(R_\p)}^{M_\p}(N_\p)<\infty\text{ for all }\p\in \Spec R.\] The converse and an evident corollary are established below.

\begin{theorem}\label{main theorem}
Let $R$ be a commutative noetherian ring and fix $M$ and $N$ in $\D^f_b(R)$. 
If $
\gin_{\D^f_b(R_\p)}^{M_\p}(N_\p) <\infty$ for all $\p\in  \Spec R$, then $N$ is an object of $\thick_{\D(R)}(M).$
\end{theorem}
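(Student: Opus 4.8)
The plan is to reduce the global statement to the converse coghost lemma (Theorem \ref{t:converse}) via a standard Koszul-complex / DG-algebra argument. First I would record the key translation: for a commutative noetherian ring $R$, containment $N\in\thick_{\D(R)}(M)$ is equivalent to $\level^M_{\D^f_b(R)}(N)<\infty$, and this is a local condition in the following sense — it suffices to show that the \emph{coghost} index $\cogin^M_{\D^f_b(R)}(N)$ is finite, by the inequality $\cogin\le\level$ together with the known converse coghost lemma of Oppermann–\v{S}\'{t}ov\'{i}\v{c}ek (\cite[Theorem 24]{OS}), which gives $\cogin^M_{\D^f_b(R)}(N)=\level^M_{\D^f_b(R)}(N)$. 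So the real content is: finiteness of $\gin^{M_\p}_{\D^f_b(R_\p)}(N_\p)$ for every prime $\p$ forces $\cogin^M_{\D^f_b(R)}(N)<\infty$.

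The device that converts a ghost statement into a coghost statement is \emph{derived duality against a Koszul complex}. Concretely, for a maximal ideal $\m$ of $R$ with residue field $k$ and $K$ the Koszul complex on a generating set of $\m$, passing to $K$-modules and applying $\RHom_R(-,K)$ (equivalently, working in $\D(K)$ where $K=\kos{R}{\bm{x}}$ is a DG algebra with $\h(K)$ finite over $\h_0(K)$) interchanges the roles of ghost and coghost maps: an $M$-ghost map in the relevant subcategory is sent to an $M^\vee$-coghost map. Here one wants to exploit that $K$ is a DG algebra satisfying the hypotheses of Theorem \ref{t:converse} — namely $\h(K)$ is noetherian over $\h_0(K)$ — so the converse \emph{coghost} lemma we just proved applies in $\D^f_b(K)$. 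I would then assemble the local data: finiteness of $\gin^{M_\p}_{\D^f_b(R_\p)}(N_\p)$ for all $\p$, fed through this duality at each maximal ideal (and using that ghost indices localize well, so it is enough to check maximal ideals), yields a uniform finite bound on the length of $M$-coghost compositions ending at $N$ after base change to the various completions/Koszul DG algebras. Faithful flatness of $R\to\widehat{R_\m}$ (or the standard fact that a map in $\D^f_b(R)$ is zero iff it is zero after base change to every $\widehat{R_\m}$, and over $\widehat{R_\m}$ iff it is zero after tensoring with the Koszul complex, since the Koszul homology detects whether a bounded complex with finitely generated homology is acyclic) then transfers the vanishing back to $\D^f_b(R)$, giving $\cogin^M_{\D^f_b(R)}(N)<\infty$.

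Spelling out the order: (1) reduce to showing $\cogin^M_{\D^f_b(R)}(N)<\infty$ using \cite[Theorem 24]{OS}; (2) reduce to checking, for each maximal ideal $\m$, that a suitable bound holds after passing to the Koszul DG algebra $K_\m$ over $\widehat{R_\m}$; (3) use that $\gin$ behaves well under flat base change and localization to get $\gin^{M_{\m}}_{\D^f_b((R_\m))}(N_\m)<\infty$ at each $\m$, and hence after completion; (4) apply the ghost/coghost duality $\RHom(-,K_\m)$ to turn this into finiteness of $\cogin$ for the dualized objects in $\D^f_b(K_\m)$ — or apply Theorem \ref{t:converse} directly once the problem is phrased over $K_\m$; (5) conclude a global $\cogin$ bound in $\D^f_b(R)$ by the detection principle that a morphism in $\D^f_b(R)$ vanishes iff it does so after each of these local/Koszul base changes, and finally invoke \cite[Theorem 24]{OS} again to convert back to $\level$. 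I expect the main obstacle to be step (2)–(4): precisely matching the Koszul DG algebra hypotheses to those of Theorem \ref{t:converse} and verifying that the duality functor $\RHom(-,K)$ really does exchange the ghost and coghost conditions while preserving the relevant finiteness, and that the resulting local bounds on composition lengths can be amalgamated into a single global bound — i.e., controlling that the number of maximal ideals involved (or the uniformity of the bound over $\Spec R$) does not blow up, which uses noetherianity of $R$ in an essential way.
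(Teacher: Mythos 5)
There is a genuine gap, and it is exactly at the step you yourself flag as the ``main obstacle.'' Your local ingredients (pass to the Koszul complex $K^{R_\p}$, use a duality to interchange ghost and coghost, and invoke Theorem \ref{t:converse} there) match the paper's Lemmas \ref{l:koszulgin} and \ref{l:gincogin}. But your route back to a \emph{global} statement over $R$ does not work as described. First, the proposed detection principle is false in the direction you need: a nonzero morphism in $\D^f_b(R_\p)$ can very well become zero after tensoring with the Koszul complex (e.g.\ multiplication by an element of $\m$ on $R_\m$ is nonzero, yet null-homotopic after $-\otimes_{R_\m} K^{R_\m}$), so vanishing after the local/Koszul base changes does not imply vanishing in $\D^f_b(R)$, and you cannot conclude a bound on $\cogin^M_{\D^f_b(R)}(N)$ this way. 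Second, even granting a correct detection statement for localizations alone, your hypothesis only gives, for each prime $\p$, some finite bound $n_\p$; to bound the global coghost index you need a single $n$ working for all primes simultaneously, and no mechanism is offered for this uniformity (noetherianity of $R$ does not by itself provide it). In effect, your plan requires re-proving a local-to-global principle from scratch.

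The paper sidesteps both problems: after obtaining $\level_{\D(K^{R_\p})}^{M(\p)}(N(\p))<\infty$ at each prime (your steps (3)--(4), done via Lemma \ref{l:koszulgin} and the dualizing DG module of \cite[Theorem 2.1]{FIJ} over the completed Koszul complex, rather than $\RHom_R(-,K)$), it restricts scalars along $R_\p\to K^{R_\p}$ to get $N(\p)\in\thick_{\D(R_\p)}(M(\p))$, and then invokes the Benson--Iyengar--Krause local-to-global principle \cite[Theorem 5.10]{BIK2} (recalled in \ref{l2g}): $N\in\thick_{\D(R)}(M)$ if and only if $N(\p)\in\thick_{\D(R_\p)}(M(\p))$ for all $\p$. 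That principle is precisely the missing global ingredient; it requires no uniform bound over the primes and no converse-coghost statement over $R$ itself, so \cite[Theorem 24]{OS} is never needed globally. To repair your argument you should replace steps (2) and (5) with this citation (or an equivalent stratification-type result), keeping your steps (3)--(4) essentially as they are.
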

\begin{corollary}\label{implication}If
$\gin_{\D^f_b(R_\p)}^{M_\p} (N_\p)<\infty$ for all $\p\in \Spec R$, then $ \gin_{\D^f_b(R)}^{M} (N)<\infty.$
\end{corollary}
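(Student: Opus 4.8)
The plan is to deduce Corollary \ref{implication} directly from Theorem \ref{main theorem} together with the (co)ghost lemma already recorded in \ref{ghostchunk}. First I would apply Theorem \ref{main theorem}: the hypothesis that $\gin_{\D^f_b(R_\p)}^{M_\p}(N_\p)<\infty$ for every $\p\in\Spec R$ gives that $N$ lies in $\thick_{\D(R)}(M)$. By \ref{level}, membership in the thick subcategory generated by $M$ is equivalent to $\level_{\D(R)}^M(N)<\infty$, and moreover, since $M$ and $N$ both lie in the thick subcategory $\D^f_b(R)$ of $\D(R)$, the last sentence of \ref{level} lets us compute this level inside $\D^f_b(R)$, i.e. $\level_{\D^f_b(R)}^M(N)<\infty$.

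Next I would invoke the general ghost lemma inequality stated in \ref{ghostchunk}, namely $\gin_{\T}^G(X)\le\level_\T^G(X)$ for any triangulated category $\T$. Applying this with $\T=\D^f_b(R)$, $G=M$, and $X=N$ yields
\[
\gin_{\D^f_b(R)}^M(N)\le \level_{\D^f_b(R)}^M(N)<\infty,
\]
which is exactly the desired conclusion.

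There is essentially no obstacle here: the corollary is a formal consequence of Theorem \ref{main theorem} and the standard (co)ghost lemma, with the only subtlety being the routine bookkeeping that $\level$ may be computed in $\D^f_b(R)$ rather than all of $\D(R)$ — and that is handled by the general remark in \ref{level} about thick subcategories containing $M$. So the entire proof amounts to: quote Theorem \ref{main theorem} to get finiteness of level, then quote the ghost lemma to bound ghost index by level.
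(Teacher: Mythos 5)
Your proposal is correct and is exactly the argument the paper intends: the corollary is stated as an evident consequence of Theorem \ref{main theorem}, obtained by concluding $N\in\thick_{\D(R)}(M)$, hence $\level_{\D^f_b(R)}^M(N)<\infty$, and then invoking the inequality $\gin_{\D^f_b(R)}^M(N)\leq \level_{\D^f_b(R)}^M(N)$ from \ref{ghostchunk}. No gaps; your bookkeeping via the last sentence of \ref{level} is the same routine step the paper leaves implicit.
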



To prove Theorem \ref{main theorem}, there are essentially two steps. We first go to  derived categories of certain Koszul complexes where it is shown that $\cogin$, $\gin$ and $\level$ all agree using Theorem \ref{t:converse}. Second, we  apply a  local-to-global principle to  conclude the desired result. We explain this below and give the proof of the theorem at the end of the section. 

\begin{chunk}\label{l2g}
 Assume $R$ is local with maximal ideal $\m$, we let $K^R$  be the Koszul complex on a minimal generating set for $\m$. It is regarded as a DG algebra in the usual way and is well-defined up to an isomorphism of DG $R$-algebras, see \cite[Section 1.6]{BH}. 
For any   $\p\in \Spec R$,  let $M$ be an object of $\D(R).$ We set 
\[M(\p)\coloneqq M_\p\otimes_{R_\p} K^{R_\p}\]   which is a DG $K^{R_\p}$-module.
Restricting scalars along the morphism of DG algebras $R_\p \to K^{R_\p}$ we may regard $M(\p)$ as an object of $\D(R_\p)$. 
In  \cite[Theorem 5.10]{BIK2},  Benson, Iyengar and Krause proved the following local-to-global principle: For objects $M$ and $N$ in $\D^f_b(R)$,
$N$
 is in $\thick_{\D(R)}(M)$ if and only if $N(\p)$ is in $\thick_{\D(R_\p)}(M(\p))$. 
\end{chunk}

\begin{lemma}\label{l:gincogin}
Let $R$ be a commutative noetherian local ring. 
For $M$ and $N$ in $\D^f_b(K^R)$, 
\[
\level_{\D(K^R)}^{M}(N)=\cogin_{\D^f_b(K^R)}^M(N)=\gin_{\D^f_b(K^R)}^M(N).
\]	
\end{lemma}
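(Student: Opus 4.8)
The plan is to reduce the ghost statement to the coghost statement, which is already handled by Theorem \ref{t:converse}, by exploiting a self-duality of the derived category $\D^f_b(K^R)$. The general (co)ghost lemma gives $\gin_{\D^f_b(K^R)}^M(N)\le \level_{\D(K^R)}^M(N)$ and $\cogin_{\D^f_b(K^R)}^M(N)\le \level_{\D(K^R)}^M(N)$, and Theorem \ref{t:converse} already upgrades the second of these to an equality $\cogin_{\D^f_b(K^R)}^M(N)=\level_{\D(K^R)}^M(N)$ (note $K^R$ is a DG algebra with $\h(K^R)$ a finitely generated, in fact finitely generated free, $\h_0(K^R)$-module, and indeed $\h_0(K^R)=R$ is noetherian, so the hypotheses apply). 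So the only thing left is the inequality $\level_{\D(K^R)}^M(N)\le \gin_{\D^f_b(K^R)}^M(N)$.

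The key point is that $K=K^R$ is a \emph{finite} Koszul complex over the noetherian local ring $R$, so it has a canonical Gorenstein-type duality: $\RHom_R(-,R)$ (equivalently $\RHom_K(-,K)$, since $\Hom_K(K,K)=K$ and $K$ is self-dual as an $R$-complex up to a shift, $\RHom_R(K,R)\simeq \shift^{-c}K$ where $c$ is the number of generators of $\m$) induces a contravariant exact autoequivalence $D$ on $\D^f_b(K)$. First I would record this: $D=\RHom_K(-,K)$ sends $\D^f_b(K)$ to itself (homology stays finitely generated over $\h_0(K)=R$ and bounded, because $K$ is a perfect $R$-complex and $D$ on $\D^f_b(R)$ is the usual Matlis-free duality which preserves $\D^f_b$), satisfies $D^2\simeq \id$ on $\D^f_b(K)$, and interchanges ghost and coghost maps: a map $f$ is $M$-ghost in $\D^f_b(K)$ if and only if $D(f)$ is $D(M)$-coghost in $\D^f_b(K)$, because $\Hom_{\D(K)}(\shift^iM,f)$ is taken by $D$ to $\Hom_{\D(K)}(D(f),\shift^{-i}D(M))$. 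It follows formally that $\gin_{\D^f_b(K)}^M(N)=\cogin_{\D^f_b(K)}^{D(M)}(D(N))$, and likewise $\level_{\D(K)}^M(N)=\level_{\D(K)}^{D(M)}(D(N))$ since $D$ is a triangulated (anti)equivalence carrying one building process to another. Applying the already-established converse coghost lemma (Theorem \ref{t:converse}) to the pair $D(M),D(N)$ then gives
\[
\gin_{\D^f_b(K)}^M(N)=\cogin_{\D^f_b(K)}^{D(M)}(D(N))=\level_{\D(K)}^{D(M)}(D(N))=\level_{\D(K)}^M(N),
\]
and combining with the already-noted equality for $\cogin$ finishes the three-way equality.

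The main obstacle I anticipate is verifying cleanly that $D=\RHom_K(-,K)$ genuinely restricts to a duality on $\D^f_b(K)$ with $D^2\simeq\id$ there — i.e. that $K$ is ``Gorenstein'' in the appropriate derived sense so that biduality holds. Concretely one wants: for $X\in\D^f_b(K)$, the homology of $\RHom_K(X,K)$ is still finitely generated over $R$ and bounded, and the natural biduality map $X\to\RHom_K(\RHom_K(X,K),K)$ is an isomorphism. Both follow from the fact that $K$ is a finite free $R$-complex (so restriction along $R\to K$ makes $\RHom_K(X,K)$ computable via $\RHom_R$, where $\RHom_R(-,R)$ is a duality on $\D^f_b(R)$ when... careful: $\RHom_R(-,R)$ is a duality on $\D^f_b(R)$ only when $R$ is Gorenstein) — so the honest argument should instead use that $K$ is self-injective/Frobenius-like over itself: $K$ is a finite DG algebra over $R$ with $\RHom_R(K,R)\simeq\shift^{-c}K$ as DG $K$-modules, whence $\RHom_K(X,K)\simeq\shift^{c}\RHom_R(X,R)$ and one needs instead the Matlis-dual packaging, or simply that $\thick_{\D(K)}(M)$ and building are preserved under the equivalence regardless of the exact shift. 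I would therefore present the duality as $D(-)=\RHom_R(-,E)$-style Matlis duality pulled back to $K$, or—cleanest—cite that for a Koszul complex $K^R$ over a local ring the functor $\Hom_R(-,R)$ applied to finite free representatives gives an honest duality on $\D^f_b(K^R)$ interchanging ghost and coghost, since $K^R$ is a finite-dimensional graded-commutative Gorenstein DG algebra. Pinning down the correct reference/statement for this self-duality is the step that needs the most care; everything after it is formal bookkeeping with the definitions of ghost, coghost, and level.
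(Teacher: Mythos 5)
Your overall strategy is exactly the paper's: use a contravariant exact auto-equivalence of $\D^f_b(K^R)$ that interchanges ghost and coghost maps, and then quote Theorem \ref{t:converse}. But the duality you actually commit to does not exist in general, and this is precisely the step you leave unresolved. You propose $D=\RHom_K(-,K)$, noting correctly that $\Hom_R(K,R)\simeq\shift^{-c}K$ and hence $\RHom_K(-,K)\simeq\shift^{c}\RHom_R(-,R)$; but this functor preserves $\D^f_b(K)$ (and satisfies biduality) only when $R$ is Gorenstein. Concretely, $k=\h_0(K^R)$ is an object of $\D^f_b(K^R)$, and $\RHom_K(k,K)\simeq\shift^{c}\RHom_R(k,R)$ has homology $\Ext^*_R(k,R)$, which is nonzero in infinitely many degrees whenever $R$ is not Gorenstein; so $D$ does not even land in $\D^f_b(K^R)$. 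Your parenthetical claim that ``$K^R$ is a finite-dimensional graded-commutative Gorenstein DG algebra'' is false in general: by Avramov--Golod, $\h(K^R)$ is a Poincar\'e duality algebra (equivalently, $K^R$ is Gorenstein in the relevant sense) exactly when $R$ is Gorenstein. You flag this difficulty yourself and offer a Matlis-dual variant as a fallback, but you do not carry it out, and the properties you would need (preservation of $\D^f_b(K^R)$, biduality, and the ghost/coghost interchange) are the entire content of the lemma beyond Theorem \ref{t:converse}. A small additional slip: $\h_0(K^R)=R/\m$, not $R$ (though the hypotheses of Theorem \ref{t:converse} still hold, since $R/\m$ is noetherian and $\h(K^R)$ is finitely generated over it).

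For comparison, the paper resolves exactly this point as follows: first reduce to the case of a complete ring using that $K^R\to K^{\widehat R}$ is a quasi-isomorphism of DG algebras (so induces an exact equivalence of the bounded derived categories, under which $\level$, $\cogin$, $\gin$ are invariant); then, $R$ complete admits a dualizing complex $\omega$ (Kawasaki), and by Frankild--Iyengar--J{\o}rgensen $\Hom_R(K,\omega)$ is a dualizing DG $K$-module, so $(-)^\dagger=\Hom_K(-,\Hom_R(K,\omega))$ is a contravariant exact auto-equivalence of $\D^f_b(K)$ with biduality; this is the functor that interchanges ghost and coghost and lets Theorem \ref{t:converse} finish the proof. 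Your Matlis-duality suggestion could plausibly be made to work (objects of $\D^f_b(K^R)$ have finite length homology over $R$, and $\Hom_R(-,E(k))$ is a perfect duality on such complexes), but as written the proposal neither establishes this nor cites a usable statement, so the key step is missing; the functor you do specify, $\RHom_K(-,K)$, would make the lemma provable only for Gorenstein $R$.
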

\begin{proof}
The natural map $K^R\to K^{\widehat{R}}$
 is a quasi-isomorphism of DG algebras and so it induces an exact  equivalence 
\[
\D^f_b(K^R)\xra{\equiv} \D^f_b(K^{\widehat{R}}).
\] Since $\cogin$,  $\gin$ and $\level$ are invariant under exact  equivalences we can assume $R$ is complete and set $K=K^R$. 

As  $R$ is complete, it is well known that $R$ admits a dualizing DG module $\omega$; see, for example,  \cite[Corollay 1.4]{Kaw}. 
Now applying \cite[Theorem 2.1]{FIJ}, $\Hom_R(K,\omega)$ is a dualizing DG $K$-module. In particular, setting $(-)^\dagger\coloneqq \Hom_{K}(-,\Hom_R(K,\omega))$ then for any $M$ in $\D^f_b(K)$, $M^\dagger$ is in $\D^f_b(K)$ and the natural biduality map $M\xra{\simeq} M^{\dagger\dagger}$ is an isomoprhism in  $\D^f_b(K).$ Hence,  $(-)^\dagger$  restricts to an exact auto-equivalence of $\D^f_b(K)$. 

Finally, as  $(-)^\dagger$  is an exact auto-equivalence of $\D^f_b(K)$   interchanging coghost and ghost maps, from Theorem \ref{t:converse} the desired equality follows. 
\end{proof}

\begin{remark}The lemma holds for any  DG alegbra $A$ satisfying the hypotheses of Theorem \ref{t:converse} which admits a dualizing DG module as defined in \cite[1.8]{FIJ}. Another
 example, generalizing the Koszul complex above, would be the DG fiber of any local ring map of finite flat dimension whose target ring admits a dualizing complex \cite[Theorem VI]{FIJ}. 
\end{remark}

\begin{lemma}\label{l:koszulgin}
	Let $R$ be a commutative noetherian local ring and let $\mathsf{t}\colon \D(R)\to \D(K^R)$ denote  $-\otimes_R K^R$.
	If  $M$ and $N$ are objects of $\D^f_b(R)$, then 
	\[
	\gin_{\D(K^R)}^{\mathsf{t} M}(\mathsf{t} N )\leq \gin_{\D(R)}^M(N).
	\]
\end{lemma}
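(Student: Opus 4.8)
The goal is to show that the functor $\mathsf{t}=-\otimes_R K^R$ does not increase the ghost index. The key structural fact I would exploit is that $\mathsf{t}$ is left adjoint to the restriction functor $\mathsf{r}\colon \D(K^R)\to \D(R)$ along the DG algebra map $R\to K^R$; in derived terms $\Hom_{\D(K^R)}(\mathsf{t}X, Y)\cong \Hom_{\D(R)}(X,\mathsf{r}Y)$ naturally in $X$ and $Y$. The strategy is to take a composition of $\mathsf{t}M$-ghost maps in $\D(K^R)$ of length $n=\gin_{\D(R)}^M(N)$ starting at $\mathsf{t}N$, restrict it to $\D(R)$, show the restricted composition is a composition of $M$-ghost maps starting at $\mathsf{r}\mathsf{t}N$, and conclude it is zero; then transport this conclusion back to $\D(K^R)$.

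First I would set up the adjunction carefully and record the naturality that identifies, for a morphism $g\colon X\to Y$ in $\D(K^R)$ and any $i$, the map $\Hom_{\D(K^R)}(\shift^i\mathsf{t}M, g)$ with $\Hom_{\D(R)}(\shift^i M,\mathsf{r}g)$ under the adjunction isomorphism (using that $\mathsf{t}$ commutes with $\shift$). This immediately shows: if $g$ is $\mathsf{t}M$-ghost in $\D(K^R)$, then $\mathsf{r}g$ is $M$-ghost in $\D(R)$. Second, given a composition of $\mathsf{t}M$-ghost maps $\mathsf{t}N=X^n\to X^{n-1}\to\cdots\to X^0$ in $\D(K^R)$, applying $\mathsf{r}$ yields a composition of $M$-ghost maps $\mathsf{r}\mathsf{t}N=\mathsf{r}X^n\to\cdots\to \mathsf{r}X^0$ in $\D(R)$. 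The subtle point here is that this is a composition \emph{ending} at $\mathsf{r}X^0$ but \emph{starting} at $\mathsf{r}\mathsf{t}N$, which is not $N$ itself; so I cannot directly invoke $\gin_{\D(R)}^M(N)=n$. To fix this, I would precompose with the unit map $\eta_N\colon N\to \mathsf{r}\mathsf{t}N$ of the adjunction: the resulting composition $N\xrightarrow{\eta_N}\mathsf{r}X^n\to\cdots\to\mathsf{r}X^0$ has length $n+1$, but its last $n$ arrows are $M$-ghost, so by definition of $\gin_{\D(R)}^M(N)=n$ the whole composite $\mathsf{r}(g^1\cdots g^n)\circ\eta_N$ is zero in $\D(R)$.

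Third, I would translate vanishing back across the adjunction. By naturality of the unit, $\mathsf{r}(g^1\cdots g^n)\circ \eta_N$ equals the image of the $K^R$-linear morphism $g^1\cdots g^n\colon \mathsf{t}N\to X^0$ under the adjunction isomorphism $\Hom_{\D(K^R)}(\mathsf{t}N,X^0)\xrightarrow{\cong}\Hom_{\D(R)}(N,\mathsf{r}X^0)$ (this is exactly the statement that the adjunction iso is computed by restricting and precomposing with $\eta_N$). Since this image is zero and the map is an isomorphism, $g^1\cdots g^n=0$ in $\D(K^R)$. As the original composition of $\mathsf{t}M$-ghost maps of length $n$ starting at $\mathsf{t}N$ was arbitrary, this shows $\gin_{\D(K^R)}^{\mathsf{t}M}(\mathsf{t}N)\leq n=\gin_{\D(R)}^M(N)$.

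\textbf{Main obstacle.} The conceptual content is routine once the adjunction is in place; the part requiring the most care is the bookkeeping around the fact that restriction of a ghost composition starting at $\mathsf{t}N$ produces a composition starting at $\mathsf{r}\mathsf{t}N$ rather than $N$, and correctly inserting the unit $\eta_N$ so that the length-$n$ ghost tail argument applies. One should also double check the edge case $\gin_{\D(R)}^M(N)=\infty$, where the inequality is vacuous, and the case $n=0$, where $\mathsf{t}N$ must already be zero because $N$ is (as $\eta_N$ is then forced to factor through $0$, or more simply because $\gin=0$ means the identity of $N$ is zero, whence $N\simeq 0$ and $\mathsf{t}N\simeq 0$).
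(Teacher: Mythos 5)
Your proposal is correct and follows essentially the same route as the paper: use that $\mathsf{t}$ is left adjoint to restriction along $R\to K^R$, note that restriction carries $\mathsf{t}M$-ghost maps to $M$-ghost maps, precompose with the unit $\eta_N\colon N\to \mathsf{r}\mathsf{t}N$ (absorbing it into the adjacent ghost map, so the composite out of $N$ is still a composition of $n$ $M$-ghost maps), and then use that the adjunction isomorphism is given by $g\mapsto \mathsf{r}(g)\circ\eta_N$ to conclude $g=0$. The only cosmetic difference is that the paper phrases the vanishing step as ``$g\circ\eta_N$ is again a composition of $n$ $M$-ghost maps'' rather than your ``length $n+1$ composition whose last $n$ arrows are ghost,'' which is the same observation.
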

\begin{proof} We set $K=K^R$. 
For $X$ in  $\D(R)$ and $Y$ in $\D(K)$, there is an adjunction isomorphism
\begin{equation}\label{adjunction}
  \Hom_{\D(K)}(\mathsf{t} X ,Y)\cong \Hom_{\D(R)}(X,Y),
\end{equation}
which is induced by the natural map   $\eta_X\colon X\to \mathsf{t} X$ given by $x\mapsto x\otimes 1.$
Moreover, when 
 $f\colon Y\rightarrow Z$ is a $\mathsf{t} M$-ghost map in $\D^f_b(K)$, then (\ref{adjunction}) implies that $f$ is a $M$-ghost map in $\D^f_b(R)$. 

Assume $n\coloneqq \gin^M_{\D^f_b(R)}(N)<\infty$ and 
Suppose $g\colon\mathsf{t} N\to Y$ in $\D^f_b(K)$ factors  as the composition of $n$ maps in  $\D^f_b(K)$ which are $\mathsf{t} M$-ghost.
Hence,  by the adjunction above  $g$ is the composition of $n$ maps in $\D^f_b(R)$ which are $M$-ghost, and thus  so is $g\circ \eta_N$. Therefore,  by assumption $g\circ \eta_N=0$ and so from  (\ref{adjunction}) we conclude that $g=0$ in $\D^f_b(K)$, completing the proof.
\end{proof}

\begin{proof}[Proof of Theorem \ref{main theorem}] Let $\p\in \Spec R$ , then by assumption $\gin_{\D^f_b(R_\p)}^{M_\p}(N_\p)<\infty$. Also, 
\begin{align*}
	\gin_{\D^f_b(R_\p)}^{M_\p}(N_\p)&\geq \gin_{\D^f_b(K^{R_\p})}^{M(\p)}(N(\p))\\
	&=\cogin_{\D^f_b(K^{R_\p})}^{M(\p)}(N(\p))\\ 
	&=\level_{\D(K^{R_\p})}^{M(\p)}(N(\p))
\end{align*}
where the inequality is from  Lemma \ref{l:koszulgin}  and the equalities are from Lemma \ref{l:gincogin}. Thus  $\level_{\D(K^{R_\p})}^{M(\p)}(N(\p))<\infty$,  and so  $N(\p)$ is in $\thick_{\D(K^{R_\p})} (M(\p))$ for all $\p\in \Spec R$. Now  by restricting scalars along $R_\p\to K^{R_\p}$ we conclude that $N(\p)$ is in $\thick_{\D(R_\p)} (M(\p))$ for all $\p\in \Spec R$. 
Finally,  we apply \ref{l2g} to obtain the desired result. 
\end{proof}

 \bibliographystyle{amsplain}
\providecommand{\bysame}{\leavevmode\hbox to3em{\hrulefill}\thinspace}
\providecommand{\MR}{\relax\ifhmode\unskip\space\fi MR }
\providecommand{\MRhref}[2]{%
  \href{http://www.ams.org/mathscinet-getitem?mr=#1}{#2}
}
\providecommand{\href}[2]{#2}

\end{document}